\documentclass[12pt]{amsart}
\usepackage[margin=1.2in]{geometry}
\usepackage{graphicx,latexsym,graphics}
\usepackage{amsfonts, amssymb, amsmath, amsthm}
\usepackage{algorithmic, algorithm}

\newtheorem{theorem}{Theorem}[section]
\newtheorem{proposition}[theorem]{Proposition}

\newtheorem{lemma}[theorem]{Lemma}

\theoremstyle{definition}
\newtheorem{definition}[theorem]{Definition}

\newtheorem{remark}[theorem]{Remark}

\numberwithin{equation}{section}

\allowdisplaybreaks

\newcommand{\beq}{\begin{eqnarray}}
\newcommand{\eeq}{\end{eqnarray}}

\newcommand{\beqs}{\begin{eqnarray*}}
\newcommand{\eeqs}{\end{eqnarray*}}

\newcommand{\NN}{\mathbb N}

\newcommand{\CC}{\mathbb C}
\newcommand{\RR}{\mathbb R}
\newcommand{\ZZ}{\mathbb Z}
\newcommand{\EE}{\mathcal E}
\newcommand{\DD}{\mathcal D}
\newcommand{\SSS}{\mathcal S}

\begin{document}

\title[Wave front set with respect to Banach spaces. Characterisation via the STFT]{Wave front sets with respect to Banach spaces of ultradistributions. Characterisation via the short-time Fourier transform}
\thanks{The research was
partially supported by the COST Action CA15225
``Fractional-- order
systems--analysis, synthesis and
their importance for future design''
and by the
bilateral project
``Microlocal analysis and applications''
between the Macedonian and
Serbian academies of sciences and arts.}

\author[P. Dimovski]{Pavel Dimovski}
\address{P. Dimovski, Faculty of Technology and Metallurgy, University Ss. Cyril and Methodius, Ruger Boskovic 16, 1000 Skopje, Macedonia}
\email{dimovski.pavel@gmail.com}

\author[B. Prangoski]{Bojan Prangoski}
\address{B. Prangoski, Faculty of Mechanical Engineering, University Ss. Cyril and Methodius, Karpos II bb, 1000 Skopje, Macedonia}
\email{bprangoski@yahoo.com}

\keywords{ultradistributions, wave front sets, short-time Fourier transform, translation-modulation invariant spaces}

\subjclass[2010]{35A18, 42B10, 46F05}

\begin{abstract}
We define ultradistributional wave front sets with respect to translation-modulation invariant Banach spaces of ultradistributions having solid Fourier image. The main result is their characterisation by the short-time Fourier transform.
\end{abstract}

\maketitle

\section{Introduction}

H\"ormander \cite{hor0} introduced the Sobolev wave front set of a distribution $f$ as the set of points $x$ and directions $\xi$ at which $f$ does not behave as an element of the Sobolev space $H^s$; i.e. it is not $H^s$ micro-regular at $(x,\xi)$. It is one of the most powerfull tools in studying the regularity of solutions of PDEs with wide range of applications in mathematical physics. Many authors considered various generalisations and characterisations of the Sobolev wave front set and other similar variants; see \cite{nakamura,piltt,rw,rzt,ScW,ScW2} and the references there in. This concept was further generalised recently in \cite{coriasco1,coriasco2} where the wave front set is defined with respect to a general Banach spaces of distributions satisfying appropriate assumptions. In the setting of ultradistributions, the wave front set with respect to Fourier-Lebesgue spaces having sub-exponential weights was considered in \cite{anj,JPTT} where the authors also gave a discrete characterisation of it.\\
\indent The goal of this article is to define the wave front set in the setting of non quasi-analytic ultradistributions with respect to a Banach space of ultradistributions satisfying appropriate assumptions; this generalisation is in the spirit of \cite{coriasco1}, where the distributional case was considered. The main result of the article (Theorem \ref{svlmtk157}) is its characterisation by the short-time Fourier transform (cf. \cite{wf-pp} for a similar characterisation of the Sobolev wave front set in the distributional setting).

\section{Preliminaries}

Let $M_p$, $p\in\NN$, be a sequence of positive numbers satisfying
$M_0=M_1=1$ for which the following conditions hold true:\\
\indent $(M.1)$ $M_p^2\leq M_{p-1}M_{p+1}$, $p\in\ZZ_+$;\\
\indent $(M.2)$ there exist $c_0,H\geq 1$ such that $M_p\leq c_0
H^p\min_{0\leq q\leq p}M_qM_{p-q}$, $p\in\NN$;\\
\indent $(M.3)$ there exists $c_0\geq 1$ such that
$\sum_{q=p+1}^{\infty}M_{q-1}/M_q\leq c_0pM_p/M_{p+1}$,
$p\in\ZZ_+$;\\
\indent $(M.4)$ $M^2_p/p!^2\leq (M_{p-1}/(p-1)!)\cdot
(M_{p+1}/(p+1)!)$.\\
The sequence $M_p=p!^s$, $s>1$, satisfies all of the above conditions. When $\alpha\in\NN^d$, we set $M_{\alpha}=M_{|\alpha|}$. The associated function to the sequence $M_p$ is defined by $M(\lambda)=\sup_{p\in\NN}\ln_+(\lambda^p/M_p)$, $\lambda>0$ (see \cite{Komatsu1}). It is continuous, non-negative, monotonically increasing function, it vanishes for sufficiently small $\lambda>0$ and increases more rapidly than $\ln \lambda^p$ as $\lambda$ tends to infinity for any $p\in\NN$.\\
\indent Given an open set $U\subseteq \RR^d$, we refer to Komatsu \cite{Komatsu1} for the definition and the basic properties of the locally convex spaces (from now on abbreviated as l.c.s.) $\EE^{(M_p)}(U)$ and $\EE^{\{M_p\}}(U)$ of ultradifferentiable functions of Beurling and Roumieu type respectively, as well as the corresponding spaces $\DD^{(M_p)}(U)$ and $\DD^{\{M_p\}}(U)$ of ultradifferentiable functions having compact support in $U$. Their strong duals are the corresponding spaces of ultradistributions of Beurling and Roumieu type. We also denote by $\DD^{(M_p)}_K$ and $\DD^{\{M_p\}}_K$ the spaces consisting of all elements of $\EE^{(M_p)}(U)$ and $\EE^{\{M_p\}}(U)$ respectively, supported by the compact set $K\subset U$ (cf. \cite{Komatsu1}). The common notation for the symbols $(M_{p})$ and $\{M_{p}\} $ will be $*$. If $\varphi\in\EE^*(U)$ never vanishes than $1/\varphi$ also belongs to $\EE^*(U)$. More precisely, we have the following result.

\begin{lemma}\label{invcc}
Let $\varphi\in\EE^*(U)$ and let $V\subseteq U$ be the open set where $\varphi\neq 0$. The function $x\mapsto 1/\varphi(x)$ belongs to $\EE^*(V)$.
\end{lemma}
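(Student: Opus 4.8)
The plan is to reduce the statement to uniform derivative estimates on compact subsets and to obtain these by differentiating the identity $\varphi\cdot(1/\varphi)=1$. First I would fix a compact set $K\subseteq V$; since $\varphi$ is continuous and non-vanishing on $V$, we have $\delta:=\inf_{x\in K}|\varphi(x)|>0$, and the elementary quotient rule shows $g:=1/\varphi\in C^\infty(V)$. Writing Leibniz' rule for $\varphi g=1$ and isolating the term in which all derivatives fall on $g$ gives, for every multi-index $\alpha\neq 0$,
\begin{equation*}
g^{(\alpha)}=-\frac{1}{\varphi}\sum_{0\neq\beta\leq\alpha}\binom{\alpha}{\beta}\varphi^{(\beta)}g^{(\alpha-\beta)},
\end{equation*}
which is the recursion on which the whole estimate rests.

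Second, I would run an induction on $|\alpha|$ to bound $\sup_K|g^{(\alpha)}|$. Inserting the assumed bounds $\sup_K|\varphi^{(\beta)}|\leq C h^{|\beta|}M_{|\beta|}$ and the inductive bounds $\sup_K|g^{(\gamma)}|\leq A L^{|\gamma|}M_{|\gamma|}$ into the recursion and grouping the terms according to $|\beta|=j$ by means of the Vandermonde identity $\sum_{\beta\leq\alpha,\,|\beta|=j}\binom{\alpha}{\beta}=\binom{|\alpha|}{j}$ reduces everything to controlling $\binom{|\alpha|}{j}M_jM_{|\alpha|-j}$. The crucial point is the inequality
\begin{equation*}
\binom{|\alpha|}{j}M_jM_{|\alpha|-j}\leq M_{|\alpha|},\qquad 0\leq j\leq|\alpha|,
\end{equation*}
which is exactly the super-multiplicativity of the normalised sequence $M_p/p!$: the latter is log-convex by $(M.4)$ and equals $1$ at $p=0$, so $(M_j/j!)(M_{|\alpha|-j}/(|\alpha|-j)!)\leq M_{|\alpha|}/|\alpha|!$, and clearing the factorials gives the claim. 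With this in hand the weighted sum collapses to a geometric series $\sum_{j\geq1}(h/L)^j$, and choosing $L$ a fixed multiple of $h$, namely $L\geq h(1+C/\delta)$, closes the induction with $A=1/\delta$. This settles the Roumieu case $*=\{M_p\}$, and it also explains why $(M.4)$ is imposed: without the binomial weight the sum would equal $(h+L)^{|\alpha|}-L^{|\alpha|}$ and no choice of $L$ would close the recursion.

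I expect the Beurling case $*=(M_p)$ to be the main obstacle, and it is worth noting precisely why the same computation does not suffice: there the hypothesis furnishes bounds for every $h>0$, but with a constant $C_h$ that blows up as $h\to0$, and the closing condition forces $L\geq h(1+C_h/\delta)$, whose infimum over $h$ is in general strictly positive; hence the recursion alone yields only membership in the Roumieu class. To reach the Beurling class I would instead invoke closure under composition with real-analytic functions, valid under $(M.1)$ and $(M.2)$: on each compact $K\subseteq V$ the image $\varphi(K)$ is a compact subset of $\CC\setminus\{0\}$ on which $w\mapsto 1/w$ is holomorphic, so $1/\varphi$ is the composite of $\varphi$ with an analytic map and therefore lies in $\EE^*(K)$ for both types simultaneously. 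Alternatively, one can retain the elementary recursion and deduce the Beurling statement from the already established Roumieu statement, using the representation of the Beurling class as the intersection of all Roumieu classes $\EE^{\{N_p\}}$ for which $\EE^{(M_p)}\subseteq\EE^{\{N_p\}}$: since $\varphi$ lies in every such $\EE^{\{N_p\}}$, the Roumieu result gives $1/\varphi\in\EE^{\{N_p\}}$ for all of them, whence $1/\varphi\in\EE^{(M_p)}$.
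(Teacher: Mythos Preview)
Your Roumieu argument via the Leibniz recursion on $\varphi g=1$ is correct and is a genuinely different, more elementary route than the paper's, which instead applies the multivariate Fa\`a di Bruno formula to the composition $(\lambda\mapsto 1/\lambda)\circ\varphi$ and then invokes $(M.4)$; the paper also points to the one-dimensional versions of exactly this inverse-closedness statement already in the literature (Bruna for $(M_p)$, Siddiqi for $\{M_p\}$). The gain of the paper's approach is uniformity: Fa\`a di Bruno produces closed-form bounds for all derivatives at once, so the Beurling and Roumieu cases are handled simultaneously with no extra reduction step. Your recursion, by contrast, avoids the combinatorics of set partitions entirely, but---as you correctly diagnose---does not close in the Beurling case because the closing condition $L\ge h(1+C_h/\delta)$ need not force $L\to 0$ as $h\to 0$.

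Two small points on your Beurling workarounds. First, closure under composition with real-analytic (or holomorphic) maps is precisely the Fa\`a di Bruno argument the paper uses, and it also relies on $(M.4)$ (equivalently, on the super-multiplicativity of $M_p/p!$), not merely on $(M.1)$ and $(M.2)$ as you state; since $(M.4)$ is assumed throughout the paper this is harmless, but the attribution of hypotheses should be corrected. Second, the intersection-of-Roumieu-classes argument is valid in principle, but you must verify that the auxiliary sequences $N_p$ over which you intersect themselves satisfy $(M.4)$, since your Roumieu step used it; for the standard choice $N_p=M_p\prod_{j\le p}r_j$ with $(r_j)$ increasing to infinity this follows at once from $(M.4)$ for $M_p$ and the monotonicity of $(r_j)$, so the argument can be completed.
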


\begin{proof} The proof relies on the multidimensional Fa\'a di Bruno formula \cite[Corollary 2.10]{faadib} applied to the composition of the functions $\lambda\mapsto 1/\lambda$ and $\varphi$ and the condition $(M.4)$ on $M_p$; it is similar to the proof of \cite[Lemma 7.5]{compo} and we omit it (see \cite[Theorem 4.1]{bruna} for the one dimensional Beurling case and \cite[Theorem 3]{siddiqi} for the one dimensional Roumieu case).
\end{proof}

The entire function $P(z) =\sum _{\alpha \in \NN^d}c_{\alpha } z^{\alpha}$, $z \in \CC^d$, is an ultrapolynomial of class $(M_{p})$ (resp. of class $\{M_{p}\}$), whenever the coefficients $c_{\alpha }$ satisfy the estimate $|c_{\alpha }|  \leq C L^{|\alpha| }/M_{\alpha}$, $\alpha \in \NN^d$, for some $C,L > 0$ (resp. for every $L > 0$ and some $C=C(L) > 0$). The corresponding operator $P(D)=\sum_{\alpha} c_{\alpha}D^{\alpha}$ is called an ultradifferential operator of class $(M_{p})$ (resp. of class $\{M_{p}\}$) and it acts continuously on $\EE^{(M_p)}(U)$ and $\DD^{(M_p)}(U)$ (resp. on $\EE^{\{M_p\}}(U)$ and $\DD^{\{M_p\}}(U)$) and the corresponding spaces of ultradistributions.\\
\indent The Fourier transform of $f\in L^1(\RR^d)$ is given by $\mathcal{F}f (\xi)=\int_{\RR^d} e^{-i x\xi}f(x)dx$, $\xi\in\RR^d$.\\
\indent For $m>0$, we denote by $\SSS^{M_p,m}(\RR^d)$ the $(B)$-space of all $\varphi\in\mathcal{C}^{\infty}(\RR^d)$ for which the norm $\|\varphi\|_m=\sup_{\alpha\in \NN^d}m^{|\alpha|}\|e^{M(m|\cdot|)}D^{\alpha}\varphi\|_{L^{\infty}(\RR^d)}/M_{\alpha}$ is finite. The spaces of sub-exponentially decreasing ultradifferentiable functions of Beurling and Roumieu type are defined by
\beqs
\SSS^{(M_{p})}(\RR^d)=\lim_{\substack{\longleftarrow\\ m\rightarrow\infty}}\SSS^{M_{p},m}(\RR^d)\,\, \mbox{and}\,\, \SSS^{\{M_{p}\}}(\RR^d)=\lim_{\substack{\longrightarrow\\ m\rightarrow 0}}\SSS^{M_{p},m}(\RR^d),
\eeqs
respectively and their strong duals $\SSS'^{(M_{p})}(\RR^d)$ and $\SSS'^{\{M_{p}\}}(\RR^d)$ are the spaces of tempered ultradistributions of Beurling and Roumieu type, respectively. When $M_p=p!^s$, $s>1$, $\SSS^{\{M_p\}}(\RR^d)$ is just the Gelfand-Shilov space $\SSS^s_s(\RR^d)$. The ultradifferential operators of class $*$ act continuously on $\SSS^*(\RR^d)$ and $\SSS'^*(\RR^d)$ and the Fourier transform is a topological isomorphism on them. We refer to \cite{PilipovicK} for the topological properties of $\SSS^*(\RR^d)$ and $\SSS'^*(\RR^d)$.\\
\indent For $f\in\SSS'^*(\RR^d)$ and $0\neq\chi\in\SSS^*(\RR^d)$, the short-time Fourier transform of $f$ with window $\chi$ (from now on abbreviated as STFT \cite{groch}; it is also known as the wave-packet transform first introduced by C\'ordoba and Fefferman \cite{corff}) is defined by $V_{\chi}f(x,\xi)=\mathcal{F}_{t\rightarrow \xi}(f(t)\overline{\chi(t-x)})$. For fixed window $\chi$, $f\mapsto V_{\chi}f$ is a continuous operator from $\SSS'^*(\RR^d)$ into $\SSS'^*(\RR^{2d})$ and it restricts to a continuous operator from $\SSS^*(\RR^d)$ into $\SSS^*(\RR^{2d})$. Furthermore, when $f\in\SSS'^*(\RR^d)$, $V_{\chi}f$ is smooth and, in fact, it is an element of $\EE^*(\RR^{2d})$. If the window $\chi$ is in $\DD^*(\RR^d)$, we can extend the definition of $V_{\chi}f$ even when $f\in\DD'^*(\RR^d)$ by $V_{\chi}f(x,\xi)=\langle e^{-i\xi \cdot} f,\overline{\chi(\cdot-x)}\rangle$ and one can easily verify that $V_{\chi}f\in\EE^*(\RR^{2d})$ in this case as well (see Remark \ref{ask15} below).\\
\indent We denote by $T_x$ and $M_{\xi}$ the translation and modulation operators: $T_x f= f(\cdot-x)$, $M_{\xi}f= e^{i\xi\cdot} f(\cdot)$. They act continuously on $\SSS^*(\RR^d)$ and, by duality, on $\SSS'^*(\RR^d)$ as well.\\
\indent We end the section by recalling the definition and some of the important properties of translation-modulation invariant $(B)$-spaces of ultradistributions \cite{DPPV1}.

\begin{definition}\label{defmod}(\cite[Definition 3.1]{DPPV1})
A $(B)$-space $E$ is said to be a translation-modulation invariant $(B)$-space of ultradistributions (in short: TMIB)) of class $*$ if it satisfies the following three conditions:
\begin{itemize}
    \item[$(a)$] The continuous and dense inclusions $\mathcal{S}^*(\mathbb{R}^d)\hookrightarrow E\hookrightarrow \mathcal{S}'^*(\RR^d)$ hold.
    \item[$(b)$] $T_{x}(E)\subseteq E$ and $M_\xi (E)\subseteq E$ for all $x,\xi\in\mathbb{R}^d$.
\item [$(c)$] There exist $\tau,C>0$ (for every $\tau>0$ there exists $C_{\tau}>0$), such that\footnote{The closed graph theorem together with the conditions (a) and (b) yield that $T_x,M_{\xi}\in\mathcal{L}(E)$, for all $x,\xi\in\RR^d$ (see the proof of \cite[Lemma 3.1]{TIBU}); hence, we can take their operator norms in \eqref{omega}.}
    \begin{equation}\label{omega}
   \omega_{E}(x):= \|T_{x}\|_{\mathcal{L}_b(E)}\leq C e^{M(\tau|x|)} \quad  \mbox{and} \quad \nu_{E}(\xi) := \|M_{-\xi}\|_{\mathcal{L}_b(E)}\leq C e^{M(\tau|\xi|)},
   \end{equation}
   where $\|\cdot \|_{\mathcal{L}_b(E)}$ stands for the norm on $\mathcal{L}(E)=\mathcal{L}(E,E)$ induced by $\|\cdot\|_E$ (the norm on $E$).
\end{itemize}
The functions $\omega_{E}:\mathbb{R}^{d}\to (0,\infty)$ and $\nu_{E}:\mathbb{R}^{d}\to (0,\infty)$ defined in \eqref{omega} are called the weight functions of the translation and modulation groups of $E$, respectively (in short: its weight functions).
\end{definition}

These spaces enjoy a number of important properties; we recall only the necessary ones here and refer to \cite{DPPV1} for the complete account (see also \cite{DPPV,TIBU}). We start by pointing out that $E$ is separable and the weight functions $\omega_E$ and $\nu_E$ are measurable. Moreover, the translation and modulation operators on $E$ form both $C_0$-groups, i.e. $x\mapsto T_x f$ and $x\mapsto M_x f$, $\RR^d\mapsto E$, are continuous for each $f\in E$. Also $E$ is a Banach convolution module over the Beurling (convolution) algebra $L^1_{\omega_E}(\RR^d)$ (the weighted $L^1$ space of measurable functions $g$ such that $\|g\|_{L^1_{\omega_E}}:=\|g\omega_E\|_{L^1}<\infty$) and a Banach multiplication module over the Wiener-Beurling (multiplication) algebra $\mathcal{F}L^1_{\nu_E}$ (see \cite[Proposition 3.2]{DPPV1}). In particular, multiplication by elements of $\SSS^*(\RR^d)$ is a well defined and continuous operation on $E$. Furthermore, the Fourier image of $E$, which we denote by $\mathcal{F}E$, is again a TMIB space of class $*$ with norm $\|\mathcal{F}f\|_{\mathcal{F}E}=\|f\|_E$ and, consequently, it enjoys all of the properties we mentioned above; in particular, its weight functions $\omega_{\mathcal{F}E}$ and $\nu_{\mathcal{F}E}$ are measurable and satisfy the estimate (\ref{omega}), with $E$ replaced by $\mathcal{F}E$.

\section{The wave front set with respect to a TMIB space of class $*$. Characterisations via the STFT}

Let $E$ be TMIB space of class $*$ over $\RR^d$. Besides the properties $(a)$, $(b)$ and $(c)$ of Definition \ref{defmod} we additionally assume that it satisfies the following:
\begin{itemize}
\item[$(d)$] $\mathcal{F}E$ is a solid space (cf. \cite{F84}), i.e. $\mathcal{F}E\subseteq L^1_{\mathrm{loc}}(\RR^d)$ \footnote{Since $\mathcal{F}E$ is continuously included into $\DD'^*(\RR^d)$, the closed graph theorem for Fr\'echet spaces immediately implies that the inclusion $\mathcal{F}E\subseteq L^1_{\mathrm{loc}}(\RR^d)$ is continuous.} and there exists $C_0>0$ such that if $g\in L^1_{\mathrm{loc}}(\RR^d)$, $f\in\mathcal{F}E$ and $|g(x)|\leq |f(x)|$ a.e. then $g\in\mathcal{F}E$ and $\|g\|_{\mathcal{F}E}\leq C_0\|f\|_{\mathcal{F}E}$.
\end{itemize}
Notice that the solidity implies that if $f\in \mathcal{F}E$ then $|f|\in\mathcal{F}E$ and $\||f|\|_{\mathcal{F}E}\leq C_0\|f\|_{\mathcal{F}E}$. Consequently, if $g\in L^1_{\mathrm{loc}}(\RR^d)$ and $f_1,\ldots, f_k\in\mathcal{F}E$ are such that $|g(x)|\leq \sum_{j=1}^k |f_j(x)|$ a.e. then $g\in \mathcal{F}E$ and $\|g\|_{\mathcal{F}E}\leq C^2_0\sum_{j=1}^k \|f_j\|_{\mathcal{F}E}$.\\
\indent Following H\"ormander \cite{hor11} (cf. \cite[Section 8.1, p. 253]{hor}), for $f\in\EE'^*(\RR^d)$ we define
the set $\Sigma_E(f)\subseteq\RR^d\backslash\{0\}$ as follows:
$\xi\in\RR^d\backslash\{0\}$ does not belong to $\Sigma_E(f)$ if
and only if there exists a cone neighbourhood $\Gamma$ of $\xi$
such that
\beq\label{defofwfe}
\theta_{\Gamma}\mathcal{F}f\in\mathcal{F}E,
\eeq
where $\theta_{\Gamma}$ denotes the characteristic function of
$\Gamma$. Clearly $\Sigma_E(f)$ is a closed cone in
$\RR^d\backslash\{0\}$. From now on, for a measurable subset $G\subseteq \RR^d$, $\theta_G$ will always
stand for the characteristic function of $G$.

\begin{remark}\label{ask15}
Before we state the next result we make the following general
observation. For every $f\in\EE'^*(\RR^d)$, $\mathcal{F}f\in \EE^*(\RR^d)$. Furthermore, if $B$ is a bounded subset of $\EE'^*(\RR^d)$ then there exist $C_1,h_1>0$ (resp. for every $h_1>0$ there exists $C_1>0$)
such that $|\mathcal{F}f(x)|\leq C_1 e^{M(h_1|x|)}$, $\forall
x\in\RR^d$, $\forall f\in B$. This easily follows from \cite[Proposition 5.11]{Komatsu1} and \cite[Theorem 8.1
and Theorem 8.7]{Komatsu1}.\\
\indent On the other hand, if $f\in\EE'^*(\RR^d)$ satisfies the following estimate: for every $h_1>0$ there exists $C_1>0$ (resp. there exist $h_1,C_1>0$) such that $|\mathcal{F}f(x)|\leq C_1 e^{-M(h_1|x|)}$, $\forall x\in\RR^d$, then a straightforward computation gives $f\in\DD^*(\RR^d)$.
\end{remark}

We recall the following lemmas from \cite{ppv} which will be used in the
proof of Proposition \ref{pro1}; we state only the Beurling case of these results since this is the only part we need for the proof of Proposition \ref{pro1}.

\begin{lemma}\label{lemma1}\cite[Lemma 2.1]{ppv}
Let $r'\geq 1$ and $k > 0$. There exists an ultrapolynomial
$P(z)$ of class $(M_p)$ such that $P$ does not vanish on $\RR^d$ and satisfies
the following estimate: there exists $C > 0$ such that
\beqs
|D^\alpha(1/P(x))|\leq C \alpha!r'^{-|\alpha|}e^{-M(k|x|)},\,\,\, \forall x\in \RR^d,\, \forall\alpha\in\NN^d.
\eeqs
\end{lemma}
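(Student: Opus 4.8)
The plan is to construct an ultrapolynomial $P(z)$ of class $(M_p)$ that does not vanish on $\RR^d$ and whose reciprocal, together with all its derivatives, decays like $e^{-M(k|x|)}$. The natural building block is an even ultrapolynomial of one real variable that grows at least like $e^{M(\tau|t|)}$ for a suitable $\tau$ chosen in terms of $k$; passing to the reciprocal then turns the growth into the desired decay. I would begin by recalling the standard construction of a non-vanishing ultrapolynomial with prescribed growth: choose a sequence $\ell_p>0$ and form
\[
P(z)=\prod_{p=1}^{\infty}\Bigl(1+\frac{z_1^2+\cdots+z_d^2}{\ell_p^2}\Bigr),
\]
or a one-dimensional product $\prod_p(1+t^2/\ell_p^2)$ composed with $|z|^2$, and verify via $(M.1)$--$(M.2)$ that the Taylor coefficients satisfy the required bound $|c_\alpha|\le CL^{|\alpha|}/M_\alpha$, so that $P$ is genuinely an ultrapolynomial of class $(M_p)$. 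Since every factor is strictly positive on $\RR^d$, $P$ has no real zeros.

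The crux is the quantitative lower bound on $|P(x)|$ for $x\in\RR^d$. Taking logarithms, $\ln|P(x)|=\sum_p\ln(1+|x|^2/\ell_p^2)$, and by retaining only the single term with $\ell_p\le|x|$ that is largest, one obtains $\ln|P(x)|\ge$ a quantity comparable to $\sup_p\ln_+(|x|/\ell_p)$. Choosing $\ell_p$ proportional to $M_p/M_{p-1}$ (scaled by a constant depending on $k$) makes this supremum comparable to the associated function $M(k'|x|)$ for some $k'>k$, giving
\[
|P(x)|\ge c\,e^{M(k'|x|)},\qquad x\in\RR^d.
\]
Consequently $|1/P(x)|\le c^{-1}e^{-M(k'|x|)}$, which handles the $\alpha=0$ case. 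This calibration of the zero-scales $\ell_p$ against the sequence $M_p$ so as to match the target $e^{-M(k|x|)}$ is the step I expect to be the main obstacle, since it requires carefully tracking the constants coming from $(M.2)$ and the definition of $M(\lambda)$, and ensuring $k'\ge k$ with room to spare.

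For the derivative estimates, the idea is that $1/P$ is itself an ultradifferentiable function whose derivatives inherit both the analytic-type factorial growth $\alpha!\,r'^{-|\alpha|}$ and the exponential decay. I would apply Lemma \ref{invcc} (or, more explicitly, the multidimensional Fa\`a di Bruno formula used in its proof) to the composition of $\lambda\mapsto 1/\lambda$ with $P$, exploiting that $P$ is bounded below by $c\,e^{M(k'|x|)}$ to control the negative powers of $P$ that appear. Each application of a derivative lowers the decay rate only by a controlled amount, so by starting from a slightly larger $k'$ one absorbs the loss and arrives at the clean bound $e^{-M(k|x|)}$ with the stated factorial factor $\alpha!\,r'^{-|\alpha|}$; condition $(M.4)$ is exactly what guarantees the reciprocal stays in the Beurling class $(M_p)$ throughout. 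Collecting the constants then yields a single $C>0$ valid for all $x$ and all $\alpha$, completing the proof.
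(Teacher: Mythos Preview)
The paper does not prove this lemma at all; it is quoted as \cite[Lemma~2.1]{ppv} and used as an external input, so there is no in-paper argument to compare your proposal against. Your overall plan---build $P$ as an infinite product $\prod_p(1+|z|^2/\ell_p^2)$ with $\ell_p$ proportional to $m_p=M_p/M_{p-1}$, then bound $1/P$ and its derivatives---is the standard construction and is essentially what one finds in the cited reference.

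There is, however, a genuine slip in your lower-bound step. Keeping a \emph{single} factor of the product yields only $\ln|P(x)|\gtrsim \sup_p\ln_+(|x|/\ell_p)$, and with $\ell_p\sim m_p$ this quantity grows merely like $\ln|x|$; it is nowhere near $M(k'|x|)$ (for $M_p=p!^s$ one has $M(\lambda)\sim\lambda^{1/s}$ while $\sup_p\ln_+(\lambda/m_p)\sim\ln\lambda$). The correct argument exploits the full product: for every $N$,
\[
|P(x)|\ge\prod_{p=1}^N\frac{|x|^2}{\ell_p^{2}}=\frac{|x|^{2N}}{(\ell_1\cdots\ell_N)^2}=\frac{(|x|/c)^{2N}}{M_N^2}\quad\text{when }\ell_p=c\,m_p,
\]
so $\ln|P(x)|\ge 2\sup_N\bigl(N\ln(|x|/c)-\ln M_N\bigr)=2M(|x|/c)$, and scaling $c$ produces any prescribed $k'$. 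For the derivative bounds, Fa\`a di Bruno can be pushed through, but invoking Lemma~\ref{invcc} and $(M.4)$ is a detour: those give local ultradifferentiability of a reciprocal, not the global decay $e^{-M(k|x|)}$. The clean route is to choose $\ell_p$ large enough that each factor stays bounded away from zero on a tube $\{|\mathrm{Im}\,z_j|\le r'\}$ and then apply Cauchy's estimate on polydiscs of radius $r'$, which delivers $|D^\alpha(1/P)(x)|\le \alpha!\,r'^{-|\alpha|}\sup_{|z-x|\le r'}|1/P(z)|$ in one stroke.
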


\begin{lemma}\label{lemma2}\cite[Lemma 2.4]{ppv}
Let $r > 0$.
\begin{itemize}
\item[$(i)$] For each $\chi,\varphi\in \SSS^{(M_p)}(\RR^d)$ and $\psi\in \SSS^{M_p,r}(\RR^d)$ it holds that $\chi*(\varphi\psi)\in \SSS^{(M_p)}(\RR^d)$.
\item[$(ii)$] Let $\varphi,\chi \in \SSS^{(M_p)}(\RR^d)$ with $\varphi(0) = 1$ and $\int_{\RR^d}\chi(x)dx=1$. For each $n\in \mathbb{Z}_+$ define
$\chi_n(x) = n^d\chi(nx)$ and $\varphi_n(x) = \varphi(x/n)$. Then there exists $k\geq 2r$ such that the operators $\tilde{Q}_n : \psi\mapsto\chi_n* (\varphi_n\psi)$ are continuous
as mappings from $\SSS^{M_p,k}(\RR^d)$ to $\SSS^{M_p,r}(\RR^d)$, for all $n\in\mathbb{ Z}_+$. Moreover $\tilde{Q}_n\rightarrow{\rm Id}$, as $n\rightarrow \infty$, in $\mathcal{L}_b(\SSS^{M_p,k}(\RR^d),\SSS^{M_p,r}(\RR^d))$.
\end{itemize}
\end{lemma}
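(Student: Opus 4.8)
The plan is to handle both parts with the same mechanism: convolution against a function in $\SSS^{(M_p)}$ moves all derivatives onto the (arbitrarily regular, arbitrarily fast decaying) kernel, so that only \emph{decay} of the factor $\varphi\psi$ is ever needed, while multiplication is controlled purely through the decay of $\psi$.

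For part $(i)$, write $g=\varphi\psi$. Since $\psi\in\SSS^{M_p,r}$ is bounded and $\varphi\in\SSS^{(M_p)}$ decays faster than $e^{-M(m|\cdot|)}$ for every $m$, the product $g$ lies in $L^1(\RR^d)$ and satisfies $|g(y)|\le C_m e^{-M(m|y|)}$ for every $m>0$; I would not need any derivative estimate on $g$. I then differentiate by moving every derivative onto $\chi$, $D^\alpha(\chi*g)=(D^\alpha\chi)*g$, so that for any target level $\tilde m$ and any $m\ge\tilde m$,
\[
\frac{\tilde m^{|\alpha|}}{M_\alpha}\,|D^\alpha(\chi*g)(x)|\le \|\chi\|_m\int_{\RR^d} e^{-M(m|x-y|)}|g(y)|\,dy ,
\]
uniformly in $\alpha$. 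Splitting the weight via $M(\tilde m|x|)\le \ln 2+M(2\tilde m|x-y|)+M(2\tilde m|y|)$ and taking $m\ge 2\tilde m$ renders the $x-y$ exponential harmless, after which the super-exponential decay of $g$ makes $\int e^{M(2\tilde m|y|)}|g(y)|\,dy<\infty$. Hence $\|\chi*g\|_{\tilde m}<\infty$ for every $\tilde m$, i.e. $\chi*(\varphi\psi)\in\SSS^{(M_p)}$.

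For part $(ii)$ I would decompose
\[
\tilde Q_n\psi-\psi=\big(\chi_n*(\varphi_n\psi)-\varphi_n\psi\big)+\big(\varphi_n\psi-\psi\big)
\]
and estimate each bracket in $\SSS^{M_p,r}$, uniformly over the unit ball of $\SSS^{M_p,k}$. First, a Leibniz estimate together with the log-convexity bound $M_\beta M_{\alpha-\beta}\le M_\alpha$ (from $(M.1)$), bounding the derivatives of $\varphi_n$ by those of $\varphi$ (the rescaling only helps, since $D^\beta\varphi_n=n^{-|\beta|}(D^\beta\varphi)(\cdot/n)$) and using the decay of $\psi$, shows that for $m$ large and any $\kappa\le k$ one has $\varphi_n\psi\in\SSS^{M_p,\kappa}$ with $\|\varphi_n\psi\|_\kappa\le \|\varphi\|_m\|\psi\|_k$, uniformly in $n$. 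I would fix $\kappa$ with $2r\le\kappa\le k$ and $\kappa>rH$, which forces the (harmless) requirement $k\ge 2r$, and take $m$ correspondingly large.

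The convergence of the two brackets is the technical core. For the multiplication term $(\varphi_n-1)\psi$ I would split $\RR^d$ into $\{|x|\le\sqrt n\}$, where $|\varphi_n(x)-1|=|\varphi(x/n)-\varphi(0)|=O(n^{-1/2})$ by the mean value theorem, and $\{|x|>\sqrt n\}$, where the gain $e^{M(r|x|)-M(k|x|)}\le e^{M(r\sqrt n)-M(k\sqrt n)}\to0$ coming from $k>r$ dominates; the Leibniz terms with a derivative falling on $\varphi_n$ carry an extra factor $n^{-|\beta|}$ and are summed geometrically. For the convolution term I would Taylor expand $D^\alpha g(x-y)-D^\alpha g(x)$ to first order, producing $\int|\chi_n(y)||y|\,dy=O(1/n)$ and one extra derivative $D^{\alpha+e_j}g$. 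The main obstacle is precisely this derivative loss: passing from $M_{|\alpha|+1}/\kappa^{|\alpha|+1}$ back to the target $M_\alpha/r^{|\alpha|}$ produces $M_{|\alpha|+1}/M_\alpha$, which by $(M.2)$ grows at most like $c_0H^{|\alpha|+1}$. This exponential growth is defeated by the geometric factor $(r/\kappa)^{|\alpha|}$ exactly when $\kappa>rH$, leaving a uniform $O(1/n)$ bound; the weight is again handled by $M(r|x|)\le\ln2+M(2r|x-ty|)+M(2r|y|)$ together with $\kappa\ge 2r$. Combining the two estimates yields $\|\tilde Q_n-\mathrm{Id}\|_{\mathcal L_b(\SSS^{M_p,k},\SSS^{M_p,r})}\to0$, and uniform boundedness follows a posteriori from $\|\tilde Q_n\psi\|_r\le\|\psi\|_r+\|\tilde Q_n\psi-\psi\|_r$ and the continuous inclusion $\SSS^{M_p,k}\hookrightarrow\SSS^{M_p,r}$.
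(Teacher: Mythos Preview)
The paper does not prove this lemma; it is quoted verbatim from \cite[Lemma 2.4]{ppv} and used as a black box in the proof of Proposition \ref{pro1}. There is therefore no ``paper's own proof'' to compare against, and your sketch has to be judged on its own merits.

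Your strategy is the natural one and is essentially correct. Part $(i)$ is fine: moving all derivatives onto $\chi$ and using only the sub-exponential decay of $\varphi\psi$ (inherited from $\varphi\in\SSS^{(M_p)}$) is exactly the right idea; the weight splitting you use is the standard $M(\tilde m|x|)\le M(2\tilde m|x-y|)+M(2\tilde m|y|)$, valid because $M$ is increasing and $|x|\le 2\max(|x-y|,|y|)$ (the extra $\ln 2$ is unnecessary but harmless). For part $(ii)$ your decomposition into the multiplication error $(\varphi_n-1)\psi$ and the convolution error $\chi_n*g-g$ is standard and your treatment of each piece is sound: the $\sqrt n$-splitting for the first and the first-order Taylor remainder for the second are the canonical moves, and your handling of the single derivative loss via $M_{|\alpha|+1}/M_\alpha\le c_0H^{|\alpha|+1}$ and the choice $\kappa>rH$ is correct.

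One small inaccuracy: the uniform Leibniz bound $\|\varphi_n\psi\|_\kappa\le\|\varphi\|_m\|\psi\|_k$ does \emph{not} hold for ``any $\kappa\le k$'' as you write. Summing the Leibniz terms produces $(\kappa/(mn)+\kappa/k)^{|\alpha|}$, so you need $\kappa/m+\kappa/k\le1$, which forces $\kappa<k$ strictly (and then $m$ large). Since you are free to choose $k$ as large as you like, simply take $k>\kappa>\max(2r,rH)$; this is consistent with the rest of your argument and with the statement ``there exists $k\ge 2r$''. With that adjustment the proof goes through.
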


\begin{proposition}\label{pro1}
Let $\psi\in\mathcal{D}^*(\mathbb{R}^d)$ and $f\in\EE'^*(\RR^d)$.
Then $\Sigma_E(\psi f)\subseteq \Sigma_E(f)$.
\end{proposition}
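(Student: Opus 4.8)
The plan is to prove the equivalent inclusion $\RR^d\setminus\Sigma_E(f)\subseteq\RR^d\setminus\Sigma_E(\psi f)$. Fix $\xi_0\in\RR^d\setminus\Sigma_E(f)$ and choose an open conic neighbourhood $\Gamma$ of $\xi_0$ with $\theta_{\Gamma}\mathcal{F}f\in\mathcal{F}E$, together with a smaller open conic neighbourhood $\Gamma'$ of $\xi_0$ satisfying $\overline{\Gamma'}\setminus\{0\}\subseteq\Gamma$; it then suffices to show $\theta_{\Gamma'}\mathcal{F}(\psi f)\in\mathcal{F}E$. Since $\psi f\in\EE'^*(\RR^d)$, both $\mathcal{F}\psi\in\SSS^*(\RR^d)$ and $\mathcal{F}f\in\EE^*(\RR^d)$ are functions, and the convolution identity $\mathcal{F}(\psi f)=(2\pi)^{-d}\,\mathcal{F}\psi*\mathcal{F}f$ holds. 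I would first note that by Remark \ref{ask15} the growth of $\mathcal{F}f$ is $|\mathcal{F}f(\eta)|\le C_1e^{M(h_1|\eta|)}$, where in the Beurling case $h_1$ is fixed while in the Roumieu case $h_1>0$ may be taken arbitrarily small, whereas $\mathcal{F}\psi$ decays like $e^{-M(h_2|\cdot|)}$ for every $h_2$ (Beurling) resp. for some fixed $h_2$ (Roumieu). In either class one may therefore arrange $h_2>h_1$, so that the convolution is an absolutely convergent integral. Splitting $\mathcal{F}f=\theta_{\Gamma}\mathcal{F}f+(1-\theta_{\Gamma})\mathcal{F}f$ I write $\theta_{\Gamma'}\mathcal{F}(\psi f)=I_1+I_2$, where $I_1=(2\pi)^{-d}\theta_{\Gamma'}\bigl(\mathcal{F}\psi*(\theta_{\Gamma}\mathcal{F}f)\bigr)$ and $I_2=(2\pi)^{-d}\theta_{\Gamma'}\bigl(\mathcal{F}\psi*((1-\theta_{\Gamma})\mathcal{F}f)\bigr)$, and treat the two pieces separately.

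For $I_1$ I would invoke the module structure. Using the weight bound $\omega_{\mathcal{F}E}(\eta)\le Ce^{M(\tau|\eta|)}$ from \eqref{omega} together with the sub-exponential decay of $\mathcal{F}\psi$, one checks $\mathcal{F}\psi\in L^1_{\omega_{\mathcal{F}E}}(\RR^d)$; since $\mathcal{F}E$ is a Banach convolution module over $L^1_{\omega_{\mathcal{F}E}}(\RR^d)$ and $\theta_{\Gamma}\mathcal{F}f\in\mathcal{F}E$, it follows that $\mathcal{F}\psi*(\theta_{\Gamma}\mathcal{F}f)\in\mathcal{F}E$. Multiplying by $\theta_{\Gamma'}$ does not increase the modulus, so the solidity assumption $(d)$ gives $I_1\in\mathcal{F}E$.

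For $I_2$ the decisive ingredient is a cone-separation estimate: because $\overline{\Gamma'}\setminus\{0\}\subseteq\Gamma$, there is $c>0$ with $|\xi-\eta|\ge c(|\xi|+|\eta|)$ whenever $\xi\in\Gamma'$ and $\eta\notin\Gamma$. Combining this with the bounds $|\mathcal{F}\psi(\xi-\eta)|\le C_2e^{-M(h_2|\xi-\eta|)}$ and $|\mathcal{F}f(\eta)|\le C_1e^{M(h_1|\eta|)}$, and repeatedly using the standard inequality $2M(\lambda)\le M(H\lambda)+\ln c_0$ (to distribute $M(h_2|\xi-\eta|)$ into one factor controlling decay in $\xi$, one factor absorbing $e^{M(h_1|\eta|)}$, and one factor giving integrability in $\eta$), I expect to obtain $|I_2(\xi)|\le C_3e^{-M(h_3|\xi|)}$ for $\xi\in\Gamma'$, with $h_3>0$ (and arbitrarily large in the Beurling case). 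To turn this pointwise decay into membership in $\mathcal{F}E$ I would exhibit a nonnegative $G\in\mathcal{F}E$ with $G(\xi)\ge c'e^{-M(h|\xi|)}$ and $h\le h_3$: starting from a fixed $0\le\Phi\in\SSS^*(\RR^d)\hookrightarrow\mathcal{F}E$ that is strictly positive on a ball, the series $G=\sum_{k\in\ZZ^d}e^{-M(\lambda|k|)}T_k\Phi$ converges in $\mathcal{F}E$ once $\lambda$ is large relative to the $\tau$ of \eqref{omega} (because $\|T_k\Phi\|_{\mathcal{F}E}\le\omega_{\mathcal{F}E}(k)\|\Phi\|_{\mathcal{F}E}$), and a nearest-lattice-point estimate yields the desired lower bound with $h=2\lambda$. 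Choosing $\lambda$ so that $h\le h_3$ and invoking solidity once more gives $I_2\in\mathcal{F}E$, whence $\theta_{\Gamma'}\mathcal{F}(\psi f)=I_1+I_2\in\mathcal{F}E$ and $\xi_0\notin\Sigma_E(\psi f)$.

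The hard part will be the treatment of $I_2$: both the passage from the cone estimate to the explicit pointwise decay bound (which requires a careful split of the associated function $M$ and attention to the opposite quantifier conventions of the Beurling and Roumieu classes, so that the decay rate $h_3$ is genuinely positive) and, above all, the final conversion of sub-exponential decay into membership in $\mathcal{F}E$ through the construction of the dominating element $G$ and the matching of the constants $h_1,h_2,h_3,\tau,\lambda$ across the two cases. By contrast, the convolution-module argument for $I_1$ and the verification of absolute convergence of the splitting are routine.
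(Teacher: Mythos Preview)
Your proposal is correct; the overall strategy (pass to the complement, pick nested cones, split the convolution into a ``near-cone'' and a ``far-cone'' piece) coincides with the paper's, but the tools you use for each piece differ from those in the paper.

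For the near-cone piece (your $I_1$, the paper's $I_2$) you invoke the ready-made fact that $\mathcal{F}E$ is an $L^1_{\omega_{\mathcal{F}E}}$-convolution module, together with solidity to cut by $\theta_{\Gamma'}$. The paper instead unfolds this module action by hand: it shows that $\eta\mapsto \theta_{\Gamma}T_\eta(\theta_{\Gamma_1}\mathcal{F}f)$ is a continuous $\mathcal{F}E$-valued map (using solidity pointwise), proves Bochner integrability against $\mathcal{F}\psi(\eta)$, and identifies the Bochner integral with the scalar convolution via pairing with test functions. Your route is shorter; the paper's is more self-contained and makes the role of solidity explicit at each step. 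One small remark: for your $I_1$ you should note, at least implicitly, that the abstract module convolution coincides with the pointwise integral here, which follows since $\mathcal{F}E\hookrightarrow L^1_{\mathrm{loc}}$ and the integral is absolutely convergent.

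For the far-cone piece (your $I_2$, the paper's $I_1$) both arguments first produce a bound $|I_2(\xi)|\leq C_3e^{-M(h_3|\xi|)}$ from the cone separation. The conversion of this decay into membership in $\mathcal{F}E$ is handled differently: the paper dominates $I_2$ by $1/P(h\,\cdot\,)$ for a suitable nonvanishing ultrapolynomial $P$ (Lemma~\ref{lemma1}) and then shows $1/P(h\,\cdot\,)\in\mathcal{F}E$ via $\SSS^{\{M_p\}}\hookrightarrow\mathcal{F}E$ in the Roumieu case and via the approximation Lemma~\ref{lemma2} (giving $\SSS^{M_p,h''}\hookrightarrow\mathcal{F}E$) in the Beurling case. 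Your lattice-sum construction $G=\sum_k e^{-M(\lambda|k|)}T_k\Phi$ is a legitimate alternative and avoids importing those two lemmas; the matching of constants you describe (choose $\tau$ small in the Roumieu case, $h_3$ large in the Beurling case, and $\lambda\geq H\tau$ for convergence with $2\lambda\leq h_3$) goes through exactly as you indicate. The paper's device has the advantage of producing an explicit majorant already known to lie in $\SSS^*$-type spaces, while yours is more elementary and stays entirely within the translation structure of $\mathcal{F}E$.
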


\begin{proof} Let $0\neq \xi_0\not\in \Sigma_E(f)$. There exists a cone neighbourhood $\Gamma_1$ of $\xi_0$ such that (\ref{defofwfe}) holds.
Pick a cone neighbourhood $\Gamma$ of $\xi_0$ such that
$\overline{\Gamma}\subseteq\Gamma_1\cup\{0\}$. We have
\beqs
\theta_{\Gamma}(\xi)\mathcal{F}(\psi
f)(\xi)=(2\pi)^{-d}\theta_{\Gamma}(\xi)\mathcal{F}\psi*\mathcal{F}f(\xi)=I_1(\xi)/(2\pi)^d+I_2(\xi)/(2\pi)^d,
\eeqs
where
\beqs
I_1(\xi)&=&\theta_{\Gamma}(\xi)\int_{\RR^d}\mathcal{F}\psi(\eta)(1-\theta_{\Gamma_1}(\xi-\eta))\mathcal{F}f(\xi-\eta)d\eta,\\
I_2(\xi)&=&\theta_{\Gamma}(\xi)\int_{\RR^d}\mathcal{F}\psi(\eta)\theta_{\Gamma_1}(\xi-\eta)\mathcal{F}f(\xi-\eta)d\eta.
\eeqs
Clearly $I_1,I_2\in L^1_{\mathrm{loc}}(\RR^d)$. We prove
that both $I_1$ and $I_2$ belong to $\mathcal{F}E$ which, in turn, will yield the claim in the proposition. For this
purpose we make the following observations: there exist $0<c<1$ such
that
\beq\label{ine11}
\{\eta\in
\mathbb{R}^d|\,\exists\xi\in\Gamma,\, |\eta-\xi|\leq
c|\xi|\}\subseteq \Gamma_1.
\eeq
For $I_1$, we avail ourselves of
(\ref{ine11}) by noticing that if $\xi\in \Gamma$ and
$\xi-\eta\not\in\Gamma_1$ than $|\eta|>c|\xi|$. Thus, applying
Remark \ref{ask15} together with \cite[Proposition 3.6]{Komatsu1} we infer
\beqs
e^{M(h|\xi|)}|I_1(\xi)|&\leq&
C_1\int_{\mathbb{R}^d}|\mathcal{F}\psi(\eta)|e^{M(h|\eta|/c)}e^{M(h_1(1+c^{-1})|\eta|)}d\eta\\
&\leq&c_0C_1\int_{\mathbb{R}^d}|\mathcal{F}\psi(\eta)|e^{M((hc^{-1}+h_1c^{-1}+h_1)H|\eta|)}d\eta.
\eeqs
Hence $e^{M(h|\cdot|)}I_1\in L^{\infty}(\RR^d)$ for every
$h>0$ in the $(M_p)$ case and for some $0<h\leq 1$ in the
$\{M_p\}$ case (in the $\{M_p\}$ case we can take $h_1$
arbitrarily small in the above estimates). Because of Lemma
\ref{lemma1}, we can find an ultrapolynomial $P(z)$ of
class $(M_p)$ which does not vanish on the real axis and satisfies
the following estimate: there exists $C'>0$ such that
$|D^{\alpha}(1/P(x))|\leq C'\alpha! e^{-M(|x|)}$, $\forall
x\in\RR^d$. As $P$ is of class $(M_p)$, there exist
$\tilde{C},s\geq 1$ such that $|P(x)|\leq \tilde{C}e^{M(s|x|)}$,
$\forall x\in\RR^d$ (see \cite[Proposition 4.5]{Komatsu1}). Thus, in the $\{M_p\}$ case, $x\mapsto
1/P(hx/s)$ belongs to $\SSS^{\{M_p\}}(\RR^d)$ and $|I_1(\xi)|\leq
C''/|P(h\xi/s)|$, $\forall \xi\in\RR^d$, for some $C''>0$. The
solidity of $\mathcal{F}E$ gives $I_1\in \mathcal{F}E$ in the
$\{M_p\}$ case. For the $(M_p)$ case, since $\SSS^{(M_p)}(\RR^d)$
is continuously included into $\mathcal{F}E$, there exist
$C',h'\geq 1$ such that $\|\varphi\|_{\mathcal{F}E}\leq C'
\|\varphi\|_{h'}$, $\forall \varphi\in\SSS^{(M_p)}(\RR^d)$, which
in turn yields that the closure of $\SSS^{(M_p)}(\RR^d)$ in
$\SSS^{M_p,h'}(\RR^d)$, which we denote by $X_{h'}$ for short, is
continuously included into $\mathcal{F}E$. Now Lemma \ref{lemma2} gives the existence of $h''>h'$ such that (in the notations of Lemma \ref{lemma2}) $\tilde{Q}_n\phi\in \SSS^{(M_p)}(\RR^d)$, $\forall\phi\in \SSS^{M_p,h''}(\RR^d)$, and $\tilde{Q}_n\phi\rightarrow \phi$, as $n\rightarrow \infty$, in the topology of $\SSS^{M_p,h'}(\RR^d)$. We conclude that $\SSS^{M_p,h''}(\RR^d)$ is continuously included into $X_{h'}$. If
$P(z)$ is the same ultrapolynomial as before, than $x\mapsto
1/P(h''x)$ belongs to $\SSS^{M_p,h''}(\RR^d)$ (notice that $(M.3)$
gives $h''^{|\alpha|}\alpha!\leq C_2h''^{-|\alpha|}M_{\alpha}$,
$\forall \alpha\in\NN^d$, for some $C_2>0$) and consequently in
$\mathcal{F}E$ as well. Since $|I_1(\xi)|\leq C''/|P(h''\xi)|$,
$\forall\xi\in\RR^d$, the solidity of $\mathcal{F}E$ proves that
$I_1\in\mathcal{F}E$ in the $(M_p)$ case as well.\\
\indent We turn our attention to $I_2$ next. Let $\eta\in
\mathbb{R}^d$ be fixed. Then
\beqs
\theta_{\Gamma}(\xi)\theta_{\Gamma_1}(\xi-\eta)|\mathcal{F}f(\xi-\eta)|\leq|T_{\eta}(\theta_{\Gamma_1}\mathcal{F}f)(\xi)|,\,\,
\forall \xi\in\RR^d.
\eeqs
Since
$\theta_{\Gamma_1}\mathcal{F}f\in\mathcal{F}E$, the solidity of
$\mathcal{F}E$ implies that $\eta\mapsto
\mathbf{F}(\eta)=\theta_{\Gamma}T_{\eta}(\theta_{\Gamma_1}\mathcal{F}f)$,
$\RR^d\rightarrow \mathcal{F}E$, is well defined
$\mathcal{F}E$-valued mapping and
\beq\label{inequal13}
\|\mathbf{F}(\eta)\|_{\mathcal{F}E}\leq C_0
\omega_{\mathcal{F}E}(\eta)\|\theta_{\Gamma_1}\mathcal{F}f\|_{\mathcal{F}E}.
\eeq
For $\eta,\eta_0\in\RR^d$, we have
\beqs
|\theta_{\Gamma}(\xi)T_{\eta}(\theta_{\Gamma_1}\mathcal{F}f)(\xi)-\theta_{\Gamma}(\xi)
T_{\eta_0}(\theta_{\Gamma_1}\mathcal{F}f)(\xi)| \leq
|T_{\eta}(\theta_{\Gamma_1}\mathcal{F}f)(\xi)-T_{\eta_0}(\theta_{\Gamma_1}\mathcal{F}f)(\xi)|,\,\,\,
\forall\xi\in\RR^d.
\eeqs
Again, the solidity of $\mathcal{F}E$ implies
\beqs
\|\mathbf{F}(\eta)-\mathbf{F}(\eta_0)\|_{\mathcal{F}E}\leq C_0
\|T_{\eta}(\theta_{\Gamma_1}\mathcal{F}f)-T_{\eta_0}(\theta_{\Gamma_1}\mathcal{F}f)\|_{\mathcal{F}E}\rightarrow
0,\,\, \mbox{as}\,\, \eta\rightarrow\eta_0.
\eeqs
Consequently, $\mathbf{F}$ is continuous and hence strongly measurable. Now,
(\ref{inequal13}) proves that $\eta\mapsto
\mathcal{F}\psi(\eta)\mathbf{F}(\eta)$, $\RR^d\rightarrow
\mathcal{F}E$, is Bochner integrable. We claim
\beq\label{srtkch135}
I_2=\int_{\mathbb{R}^d}\mathcal{F}\psi(\eta)\mathbf{F}(\eta)d\eta\in\mathcal{F}E.
\eeq
To verify this, fix $\varphi\in\mathcal{D}^*(\mathbb{R}^d)$.
Then
\beqs
\left\langle\int_{\mathbb{R}^d}\mathcal{F}\psi(\eta)\mathbf{F}(\eta)d\eta,\varphi\right\rangle&=&
\int_{\mathbb{R}^d}\mathcal{F}\psi(\eta)\langle
\mathbf{F}(\eta),\varphi\rangle
d\eta\\
&=&\int_{\RR^{2d}}\mathcal{F}\psi(\eta)\theta_{\Gamma}(\xi)\theta_{\Gamma_1}(\xi-\eta)\mathcal{F}f(\xi-\eta)\varphi(\xi)d\xi
d\eta,
\eeqs
where, the very last integral is absolutely
convergent. We conclude
\beqs
\left\langle\int_{\mathbb{R}^d}\mathcal{F}\psi(\eta)\mathbf{F}(\eta)d\eta,\varphi\right\rangle=\langle
I_2,\varphi\rangle.
\eeqs
As $\varphi\in\DD^*(\RR^d)$ is
arbitrary, we deduce (\ref{srtkch135}), which completes the
proof of the proposition.
\end{proof}

Following H\"ormander \cite{hor11} (cf. \cite[Section 8.1, p. 253]{hor}), for $f\in\DD'^*(\RR^d)$ and $x\in\RR^d$, we define
\beqs
\Sigma_{x,E}(f)=\bigcap_{\chi\in\DD^*(\RR^d),\, \chi(x)\neq 0}\Sigma_E(\chi f).
\eeqs
Clearly, $\Sigma_{x,E}(f)$ is a closed cone subset of $\RR^d\backslash\{0\}$.

\begin{proposition}\label{reg}
Let $f\in\DD'^*(\RR^d)$, $x\in\RR^d$ and $\Gamma$ be an open cone such that
$\Sigma_{x,E}(f)\subseteq \Gamma$. There exists
$\chi\in\DD^*(\RR^d)$ satisfying $\chi(x)\neq 0$ and having
support arbitrarily close to $x$ such that $\Sigma_E(\chi
f)\subseteq \Gamma$. In particular, $\Sigma_{x,E}(f)=\emptyset$ if
and only if there exists $\chi\in\DD^*(\RR^d)$, satisfying
$\chi(x)\neq 0$, such that $\chi f\in E$.
\end{proposition}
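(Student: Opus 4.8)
The plan is to establish the first assertion by a compactness argument on the unit sphere, and then to deduce the ``in particular'' statement as a consequence together with Remark \ref{ask15}. Let me set up the compactness step first. Since $\Gamma$ is an open cone containing the closed cone $\Sigma_{x,E}(f)$, the complement $\Gamma^c\backslash\{0\}$ intersected with the unit sphere $S^{d-1}$ is compact and disjoint from $\Sigma_{x,E}(f)$. For each direction $\eta\in S^{d-1}\backslash\Gamma$, the definition of $\Sigma_{x,E}(f)=\bigcap_{\chi(x)\neq 0}\Sigma_E(\chi f)$ shows that $\eta\notin\Sigma_{x,E}(f)$, so there is some test function $\chi_\eta\in\DD^*(\RR^d)$ with $\chi_\eta(x)\neq 0$ for which $\eta\notin\Sigma_E(\chi_\eta f)$. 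Because $\Sigma_E(\chi_\eta f)$ is a closed cone, each $\eta$ has an open conical neighbourhood avoiding it; these neighbourhoods cover the compact set $S^{d-1}\backslash\Gamma$, so finitely many directions $\eta_1,\dots,\eta_N$ suffice, giving test functions $\chi_1,\dots,\chi_N$.

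The main obstacle is combining the finitely many cutoffs $\chi_1,\dots,\chi_N$ into a single $\chi$ while controlling the resulting wave front set. The natural candidate is $\chi=\psi\prod_{j=1}^N\chi_j$ for a suitable $\psi\in\DD^*(\RR^d)$ with $\psi(x)\neq 0$ and support near $x$; since each $\chi_j(x)\neq 0$ we have $\chi(x)\neq 0$, and by shrinking $\psi$ we make $\supp\chi$ as close to $x$ as desired. To control $\Sigma_E(\chi f)$, I would invoke Proposition \ref{pro1}: for each fixed $j$, writing $\chi f=\bigl(\psi\prod_{k\neq j}\chi_k\bigr)\cdot(\chi_j f)$ with the bracketed factor in $\DD^*(\RR^d)$, that proposition yields $\Sigma_E(\chi f)\subseteq\Sigma_E(\chi_j f)$. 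Hence $\Sigma_E(\chi f)\subseteq\bigcap_{j=1}^N\Sigma_E(\chi_j f)$, and by our choice the latter intersection meets $S^{d-1}$ only inside $\Gamma$; being a cone, it is therefore contained in $\Gamma$. This is exactly where Proposition \ref{pro1} does the essential work, turning a product of cutoffs into an intersection of directional information.

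For the ``in particular'' statement, one direction is immediate: if $\chi f\in E\subseteq\SSS'^*(\RR^d)$ with $\chi(x)\neq 0$, then $\mathcal{F}(\chi f)\in\mathcal{F}E$, so $\theta_\Gamma\mathcal{F}(\chi f)\in\mathcal{F}E$ for every cone $\Gamma$ by solidity of $\mathcal{F}E$; thus $\Sigma_E(\chi f)=\emptyset$, whence $\Sigma_{x,E}(f)=\emptyset$. For the converse, suppose $\Sigma_{x,E}(f)=\emptyset$ and apply the first part with, say, $\Gamma=\RR^d\backslash\{0\}$ replaced by a genuinely empty target: taking $\Gamma$ to be an arbitrarily thin cone is not quite enough, so instead I would directly produce $\chi$ with $\Sigma_E(\chi f)=\emptyset$ by covering all of $S^{d-1}$ by finitely many cones avoiding directional singularities (again using compactness of $S^{d-1}$ and Proposition \ref{pro1} as above). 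Once $\Sigma_E(\chi f)=\emptyset$, the whole Fourier transform $\mathcal{F}(\chi f)=\theta_{\RR^d\backslash\{0\}}\mathcal{F}(\chi f)$ lies in $\mathcal{F}E$ (the origin being a null set, irrelevant by solidity), and therefore $\chi f\in E$. The final bookkeeping step to watch is ensuring $\chi(x)\neq 0$ survives in this global covering, which is handled exactly as before by multiplying in a fixed $\psi$ localized at $x$.
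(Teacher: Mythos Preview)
Your proposal is correct and is exactly the classical compactness-on-the-sphere argument that the paper invokes (namely \cite[Section 8.1, p.~253--254]{hor}, with Proposition~\ref{pro1} supplying the key inclusion $\Sigma_E(\psi g)\subseteq\Sigma_E(g)$). Two small clean-ups: the reference to Remark~\ref{ask15} is not actually used, and the step ``$\Sigma_E(\chi f)=\emptyset\Rightarrow\mathcal{F}(\chi f)\in\mathcal{F}E$'' should be justified via your finite cone cover of $S^{d-1}$ together with solidity (from $|\mathcal{F}(\chi f)|\leq\sum_k|\theta_{\Gamma_k}\mathcal{F}(\chi f)|$), rather than by treating $\RR^d\backslash\{0\}$ as a single admissible cone.
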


\begin{proof} The proof is the same as in \cite[Section 8.1, p. 253-254]{hor} but now applying Proposition \ref{pro1} and Lemma \ref{invcc}.
\end{proof}

We can now define the wave front set of $f\in\DD'^*(\RR^d)$ with respect to $E$.

\begin{definition}\label{def1}
For $f\in\DD'^*(\RR^d)$, we define the $E$-wave front set of $f$ by
\beqs WF_E(f)=\{(x,\xi)\in\RR^d\times(\RR^d\backslash\{0\})|\,
\xi\in\Sigma_{x,E}(f)\}.
\eeqs
\end{definition}

\begin{remark}
Clearly, $WF_E(f)$ is a closed subset of
$\RR^d\times(\RR^d\backslash\{0\})$ and it is conic in the second
variable, i.e. if $(x,\xi)\in WF_E(f)$ than $(x, \lambda\xi)\in
WF_E(f)$, $\forall \lambda>0$. Hence, we can consider it as a
closed subspace of $\RR^d\times\mathbb{S}^{d-1}$.
\end{remark}

\begin{remark}
When $E$ is a Sobolev space $H^s(\RR^d)$, $s\in\RR$, and $f\in\DD'(\RR^d)$, then the definition of $WF_E(f)$ coincides with the Sobolev wave front set of $f$ as defined by H\"ormander \cite[Definition 8.2.5, p. 188; Proposition 8.2.6, p. 189]{hor0}.
\end{remark}

\begin{remark}
For $f\in\DD'^*(\RR^d)$, we can define the set $\mathrm{sing}\,\, \mathrm{supp}_Ef\subseteq \RR^d$ whose complement is given by the points at which $f$ locally behaves as an element of $E$. More precisely, $x_0\in\RR^d$ does not belong to $\mathrm{sing}\,\, \mathrm{supp}_E f$ if and only if there exists $\chi\in\DD^*(\RR^d)$ satisfying $\chi(x_0)\neq0$ such that $\chi f\in E$ (because of Lemma \ref{invcc}, this is the same as if we furthermore require for $\chi$ to be identically equal to $1$ on a neighbourhood of $x_0$). Clearly, $\mathrm{sing}\,\, \mathrm{supp}_E f$ is closed in $\RR^d$ and Proposition \ref{reg} proves that the projection of $WF_E(f)$ on the first component is exactly $\mathrm{sing}\,\, \mathrm{supp}_E f$.
\end{remark}

We can now formulate and prove the main result of the article.

\begin{theorem}\label{svlmtk157}
Let $f\in\mathcal{D}'^*(\RR^d)$ and $(x_0,\xi_0)\in
\mathbb{R}^d\times(\mathbb{R}^d\backslash\{0\})$. The following
conditions are equivalent.
\begin{itemize}
\item[$(i)$] $(x_0,\xi_0)\not\in WF_E(f)$.
\item[$(ii)$] There exist a cone neighbourhood $\Gamma$ of $\xi_0$ and a compact neighbourhood $K$ of $x_0$ such that the mapping
$\chi\mapsto\theta_{\Gamma}\mathcal{F}(\chi f)$,
$\mathcal{D}^*_K\rightarrow \mathcal{F}E$, is well-defined and
continuous.
\item[$(iii)$] There exist a cone neighbourhood $\Gamma$ of $\xi_0$ and a compact neighbourhood $K$ of $x_0$ such that
\beqs
\theta_{\Gamma}V_{\chi}f(x,\cdot)\in\mathcal{F}E,\,\,\,
\forall \chi\in\DD^*_{K-\{x_0\}},\,\, \forall x\in K,
\eeqs
the mapping $x\mapsto \theta_{\Gamma} V_{\chi}f(x,\cdot)$,
$K\rightarrow \mathcal{F}E$, is continuous and the mapping
\beq\label{vsrkln157}
\chi\mapsto \theta_{\Gamma}V_{\bar{\chi}} f,\,\,\,
\DD^*_{K-\{x_0\}}\rightarrow
\mathcal{C}(K;\mathcal{F}E),
\eeq
is continuous.\footnote{$\mathcal{C}(K;\mathcal{F}E)$ stands for the $(B)$-space of all continuous functions $K\rightarrow \mathcal{F}E.$}
\item[$(iv)$] There exist a cone neighbourhood $\Gamma$ of $\xi_0$, a compact neighbourhood $K$ of $x_0$ and
$\chi\in\DD^*(\RR^d)$, satisfying $\chi(0)\neq 0$ such that
$\theta_{\Gamma}V_{\chi}f(x,\cdot)\in\mathcal{F}E$, $\forall x\in
K$, and $\sup_{x\in K}\|\theta_{\Gamma}V_{\chi}f(x,\cdot)\|_{\mathcal{F}E}<\infty$.
\end{itemize}
\end{theorem}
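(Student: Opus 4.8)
The plan is to prove the cycle of implications $(i)\Rightarrow(ii)\Rightarrow(iii)\Rightarrow(iv)\Rightarrow(i)$. The bridge between the windowed Fourier transform of $(ii)$ and the short-time Fourier transform of $(iii)$--$(iv)$ is the elementary identity $V_\chi f(x,\cdot)=\mathcal{F}\big((T_x\bar\chi)f\big)$ together with its conjugate form $V_{\bar\chi}f(x,\cdot)=\mathcal{F}\big((T_x\chi)f\big)$, so that $\theta_\Gamma V_{\bar\chi}f(x,\cdot)$ is precisely the object $\theta_\Gamma\mathcal{F}(\psi f)$ of $(ii)$ evaluated at the translated window $\psi=T_x\chi$, whose support lies in $\supp\chi+x$. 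I keep this correspondence in mind throughout.

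For $(i)\Rightarrow(ii)$, the definition of $WF_E$ furnishes $\chi_0\in\DD^*(\RR^d)$ with $\chi_0(x_0)\neq0$ and a cone neighbourhood $\Gamma_1$ of $\xi_0$ with $\theta_{\Gamma_1}\mathcal{F}(\chi_0 f)\in\mathcal{F}E$. I pick a compact neighbourhood $K$ of $x_0$ inside $\{\chi_0\neq0\}$ and a cone $\Gamma$ with $\overline\Gamma\subseteq\Gamma_1\cup\{0\}$. By Lemma \ref{invcc}, $1/\chi_0\in\EE^*$ near $K$, so every $\chi\in\mathcal{D}^*_K$ factors as $\chi=(\chi/\chi_0)\chi_0$ with $\chi\mapsto\chi/\chi_0$ continuous $\mathcal{D}^*_K\to\mathcal{D}^*_K$, and $\chi f=(\chi/\chi_0)(\chi_0 f)$. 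I then rerun the proof of Proposition \ref{pro1} with $\psi=\chi/\chi_0$ and $g=\chi_0 f$, now tracking constants: the $I_1$-part is dominated in $\mathcal{F}E$ by a continuous seminorm of $\psi$ (through $\int|\mathcal{F}\psi(\eta)|e^{M(c|\eta|)}d\eta$, domination by $1/|P|$, and solidity), while the $I_2$-part is bounded via \eqref{inequal13} by $\big(\int|\mathcal{F}\psi(\eta)|\,\omega_{\mathcal{F}E}(\eta)\,d\eta\big)\|\theta_{\Gamma_1}\mathcal{F}(\chi_0 f)\|_{\mathcal{F}E}$. Both majorants are continuous seminorms of $\psi$, hence of $\chi$, so $\chi\mapsto\theta_\Gamma\mathcal{F}(\chi f)$ is well-defined and continuous $\mathcal{D}^*_K\to\mathcal{F}E$.

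For $(ii)\Rightarrow(iii)$ I shrink the base: if $(ii)$ holds with $\Gamma,K_0$, I take a compact neighbourhood $K\subseteq K_0$ of $x_0$ so small that $(K-\{x_0\})+K\subseteq K_0$; then for $\chi\in\mathcal{D}^*_{K-\{x_0\}}$ and $x\in K$ both $T_x\chi$ and $T_x\bar\chi$ lie in $\mathcal{D}^*_{K_0}$, whence $\theta_\Gamma V_\chi f(x,\cdot)=L(T_x\bar\chi)$ and $\theta_\Gamma V_{\bar\chi}f(x,\cdot)=L(T_x\chi)$ for the continuous map $L$ of $(ii)$. Continuity in $x$ and continuity of $\chi\mapsto\theta_\Gamma V_{\bar\chi}f$ into $\mathcal{C}(K;\mathcal{F}E)$ follow by composing $L$ with the jointly continuous and locally equicontinuous map $(x,\chi)\mapsto T_x\chi$ on $K\times\mathcal{D}^*_{K-\{x_0\}}$. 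The step $(iii)\Rightarrow(iv)$ is immediate: fixing any $\chi\in\mathcal{D}^*_{K-\{x_0\}}$ with $\chi(0)\neq0$ (possible since $x_0$ is interior to $K$), continuity of $x\mapsto\theta_\Gamma V_\chi f(x,\cdot)$ on the compact $K$ forces the supremum in $(iv)$ to be finite.

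The decisive implication is $(iv)\Rightarrow(i)$. With $S=\sup_{x\in K}\|\theta_\Gamma V_\chi f(x,\cdot)\|_{\mathcal{F}E}<\infty$ and $\chi(0)\neq0$, I reconstruct a single cutoff: choosing $\rho\in\mathcal{D}^*(\RR^d)$, $\rho\geq0$, $\supp\rho\subseteq\mathrm{int}\,K$, $\rho(x_0)>0$, I set $\Phi(t)=\int\rho(x)|\chi(t-x)|^2\,dx\in\mathcal{D}^*(\RR^d)$, so $\Phi(x_0)>0$ and $\Phi f=\int\rho(x)(T_x\chi)\,g_x\,dx$ with $g_x=(T_x\bar\chi)f$, $\mathcal{F}g_x=V_\chi f(x,\cdot)$; Fourier transforming gives $\mathcal{F}(\Phi f)(\zeta)=(2\pi)^{-2d}\int\rho(x)\big((M_{-x}\mathcal{F}\chi)*V_\chi f(x,\cdot)\big)(\zeta)\,dx$. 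Fixing a cone $\Gamma'$ with $\overline{\Gamma'}\subseteq\Gamma\cup\{0\}$ and splitting $V_\chi f=\theta_\Gamma V_\chi f+(1-\theta_\Gamma)V_\chi f$, the near part is handled by the convolution module structure of $\mathcal{F}E$ over $L^1_{\omega_{\mathcal{F}E}}$ and the bound $\|M_{-x}\mathcal{F}\chi\|_{L^1_{\omega_{\mathcal{F}E}}}=\int|\mathcal{F}\chi|\,\omega_{\mathcal{F}E}<\infty$ (Paley--Wiener decay beating the sub-exponential growth of $\omega_{\mathcal{F}E}$), giving $\mathcal{F}E$-norm $\leq C_\chi S$ uniformly for $x\in\supp\rho$, so that the $x$-integral is an absolutely convergent $\mathcal{F}E$-valued integral and, after the factor $\theta_{\Gamma'}$, lies in $\mathcal{F}E$ by solidity; the far part uses the cone separation $|\zeta-\eta|\geq c|\zeta|$ for $\zeta\in\Gamma'$, $\eta\notin\Gamma$, so the rapid decay of $\mathcal{F}\chi$ against $|V_\chi f(x,\eta)|\leq C_1e^{M(h_1|\eta|)}$ (Remark \ref{ask15}, the family $\{g_x:x\in\supp\rho\}$ being bounded in $\EE'^*(\RR^d)$) makes it decay like $e^{-M(h'|\zeta|)}$ and hence lie in $\mathcal{F}E$ by the $1/|P|$-domination used in the $I_1$-argument of Proposition \ref{pro1}; thus $\theta_{\Gamma'}\mathcal{F}(\Phi f)\in\mathcal{F}E$, i.e. $\xi_0\notin\Sigma_E(\Phi f)$, and $\Phi(x_0)\neq0$ yields $(i)$. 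I expect the main obstacle to be this near part: deducing strong measurability, hence Bochner integrability, of $x\mapsto(M_{-x}\mathcal{F}\chi)*\theta_\Gamma V_\chi f(x,\cdot)$ from the mere uniform bound of $(iv)$, which I would resolve through the separability of $\mathcal{F}E$ and the joint smoothness of $V_\chi f$ via Pettis' theorem, testing against $\mathcal{D}^*(\RR^d)$ and applying Fubini as in the $I_2$-part of Proposition \ref{pro1}, while keeping $\Phi$ simultaneously compactly supported and non-vanishing at $x_0$.
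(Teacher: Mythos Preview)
Your cycle $(i)\Rightarrow(ii)\Rightarrow(iii)\Rightarrow(iv)$ is sound and close to the paper's, but you have badly overcomplicated $(iv)\Rightarrow(i)$. The paper's argument is a one-liner: specialise $x=x_0$. Indeed, $(iv)$ gives in particular $\theta_\Gamma V_\chi f(x_0,\cdot)\in\mathcal{F}E$; but $V_\chi f(x_0,\cdot)=\mathcal{F}\big((T_{x_0}\bar\chi)f\big)$ and the cutoff $\psi:=T_{x_0}\bar\chi\in\DD^*(\RR^d)$ satisfies $\psi(x_0)=\overline{\chi(0)}\neq0$, so $\theta_\Gamma\mathcal{F}(\psi f)\in\mathcal{F}E$ is precisely the statement $\xi_0\notin\Sigma_E(\psi f)$, hence $\xi_0\notin\Sigma_{x_0,E}(f)$. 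Your reconstruction via $\Phi(t)=\int\rho(x)|\chi(t-x)|^2\,dx$, the convolution-module estimate, and the Bochner/Pettis integrability discussion are all unnecessary machinery here (and, as you yourself flag, the strong measurability of $x\mapsto(M_{-x}\mathcal{F}\chi)*\theta_\Gamma V_\chi f(x,\cdot)$ from a mere uniform bound would still need a careful argument). The uniform bound over $K$ in $(iv)$ is not actually used in this implication; it is there only so that $(iii)\Rightarrow(iv)$ is immediate.

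On the other steps: for $(i)\Rightarrow(ii)$ the paper also reruns Proposition~\ref{pro1} on $\chi_0 f$ and divides by $\chi_0$ via Lemma~\ref{invcc}, but instead of tracking the constants it obtains continuity for free from the Ptak closed graph theorem, using that $\chi\mapsto\theta_\Gamma\mathcal{F}(\chi f)$ is already continuous into $\SSS'^*(\RR^d)$ and that $\mathcal{F}E\hookrightarrow\SSS'^*(\RR^d)$ continuously. Your explicit-estimate route works too and is arguably more informative. For $(ii)\Rightarrow(iii)$ the paper proves continuity in $x$ by a first-order Taylor expansion of $\chi(t-x)$ combined with solidity, and continuity of \eqref{vsrkln157} by the bornological property of $\DD^*_{K-\{x_0\}}$; your appeal to joint continuity of $(x,\chi)\mapsto T_x\chi$ achieves the same end more compactly.
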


\begin{proof}
$(i)\Rightarrow (ii)$. Pick a cone neighbourhood $\Gamma_1$ of
$\xi_0$ and $\chi\in\mathcal{D}^*(\RR^d)$ with $\chi(x_0)\neq 0$
such that $\theta_{\Gamma_1}\mathcal{F}(\chi f)\in\mathcal{F}E$.
Let $K_1$ be a compact neighbourhood of $x_0$ such that $\chi$
never vanishes on $K_1$ and take a compact neighbourhood $K$ of
$x_0$ such that $K\subset \mathrm{int}\, K_1$. Fix an open cone
$\Gamma\ni\xi_0$ satisfying
$\overline{\Gamma}\subseteq\Gamma_1\cup\{0\}$ and find $0<c< 1$
such that (\ref{ine11}) holds true. Repeating the proof of
Proposition \ref{pro1} verbatim with $\chi f\in\EE'^*(\RR^d)$ in
place of $f$ we conclude that $\theta_{\Gamma}\mathcal{F}(\psi\chi
f)\in\mathcal{F}E$, for all $\psi\in\DD^*_K$. Lemma \ref{invcc} infers the function
$x\mapsto 1/\chi(x)$, $\mathrm{int}\, K_1\rightarrow \CC$, belongs
to $\EE^*(\mathrm{int}\, K_1)$. Thus, for each $\psi\in\DD^*_K$, we have $\psi f=(\psi/\chi)\chi
f$, with $\psi/\chi\in\DD^*_K$. We deduce that
$\theta_{\Gamma}\mathcal{F}(\psi f)\in\mathcal{F}E$, for all
$\psi\in\DD^*_K$. Since $\psi\mapsto
\theta_{\Gamma}\mathcal{F}(\psi f)$, $\DD^*_K\rightarrow
\SSS'^*(\RR^d)$, is continuous we conclude that the mapping
$\psi\mapsto \theta_{\Gamma}\mathcal{F}(\psi f)$,
$\DD^*_K\rightarrow \mathcal{F}E$, has closed graph
($\mathcal{F}E$ is continuously included into $\SSS'^*(\RR^d)$).
The Ptak closed graph theorem \cite[Theorem 8.5, p. 166]{Sch} implies that it
is continuous ($\DD^*_K$ is barrelled and $\mathcal{F}E$ is a $(B)$-space and consequently a Ptak space; see \cite[Section 4.8, p. 162]{Sch}).\\
$(ii)\Rightarrow (iii)$. Let $K_1$ be a compact neighbourhood of
$x_0$ and $\Gamma$ a cone neighbourhood of $\xi_0$ such that
$\chi\mapsto \theta_{\Gamma}\mathcal{F}(\chi f)$,
$\DD^*_{K_1}\rightarrow\mathcal{F}E$, is well-defined and
continuous. Without losing of generality, we can assume that
$K_1=\overline{B(x_0,r)}$, for some $r>0$. Let
$K=\overline{B(x_0,r/4)}$. For every $x\in K$ and
$\chi\in\DD^*_{K-\{x_0\}}$, the function
$t\mapsto\chi_x(t)=\overline{\chi(t-x)}$ belongs to $\DD^*_{K_1}$
and thus
$\theta_{\Gamma}V_{\chi}f(x,\cdot)=\theta_{\Gamma}\mathcal{F}(\chi_x
f)\in\mathcal{F}E$. Fix $\chi\in\DD^*_{K-\{x_0\}}$. Our immediate
goal is to prove that the mapping $x\mapsto
\theta_{\Gamma}V_{\chi}f(x,\cdot)$, $K\rightarrow \mathcal{F}E$,
is continuous. Let $x'\in K$ be arbitrary but fixed. The Taylor
formula yields
\beqs
\overline{\chi(t-x)}-\overline{\chi(t-x')}=\sum_{|\beta|=1}(x'-x)^{\beta}\int_0^1\overline{\partial^{\beta}\chi(t-x'+s(x'-x))}ds.
\eeqs
When $x\in K$, the function
\beqs
t\mapsto\chi_{x,x',\beta}(t)=\int_0^1\overline{\partial^{\beta}\chi(t-x'+s(x'-x))}ds
\eeqs
belongs to $\DD^*_{K_1}$ and the set $\{\chi_{x,x',\beta}|\,
x\in K,\, |\beta|=1\}$ is bounded in $\DD^*_{K_1}$. Thus, there
exists $C'>0$ such that
\beqs
\|\theta_{\Gamma}\mathcal{F}(\chi_{x,x',\beta}
f)\|_{\mathcal{F}E}\leq C',\,\, \forall x\in K,\, \forall
|\beta|=1.
\eeqs
Since
\beqs
|\theta_{\Gamma}(\xi)V_{\chi}f(x,\xi)-\theta_{\Gamma}(\xi)V_{\chi}f(x',\xi)|=
\left|\sum_{|\beta|=1}(x'-x)^{\beta}\theta_{\Gamma}(\xi)\mathcal{F}(\chi_{x,x',\beta}f)(\xi)\right|,
\eeqs
for all $\xi\in\RR^d$, $x\in K$, the solidity of
$\mathcal{F}E$ proves
\beqs
\|\theta_{\Gamma}V_{\chi}f(x,\cdot)-\theta_{\Gamma}V_{\chi}f(x',\cdot)\|_{\mathcal{F}E}\leq
C_0C'd|x-x'|\rightarrow0,\,\, \mbox{as}\,\, x\rightarrow x',
\eeqs
which, in turn, verifies the continuity of $x\mapsto
\theta_{\Gamma}V_{\chi}f(x,\cdot)$, $K\rightarrow \mathcal{F}E$.
It remains to prove the continuity of the mapping
(\ref{vsrkln157}). Let $B$ be a bounded subset of
$\DD^*_{K-\{x_0\}}$. One easily verifies that $\{\overline{\chi_x}|\, x\in K,
\chi\in B\}$ is a bounded subset of $\DD^*_{K_1}$. As $\psi\mapsto
\theta_{\Gamma}\mathcal{F}(\psi f)$, $\DD^*_{K_1}\rightarrow
\mathcal{F}E$, is continuous and
$\theta_{\Gamma}V_{\bar{\chi}}f(x,\cdot)=\theta_{\Gamma}\mathcal{F}(\overline{\chi_x}
f)$, $\forall x\in K$, $\forall \chi\in \DD^*_{K-\{x_0\}}$, we infer that
the set $\{\theta_{\Gamma}V_{\bar{\chi}}f(x,\cdot)|\, \chi\in B,\, x\in
K\}$ is bounded in $\mathcal{F}E$ and consequently the image of
$B$ under the mapping (\ref{vsrkln157}) is bounded in
$\mathcal{C}(K;\mathcal{F}E)$. Since
$\DD^*_{K-\{x_0\}}$ is bornological, we conclude that
(\ref{vsrkln157}) is continuous.\\
$(iii)\Rightarrow (iv)$ is trivial and $(iv)\Rightarrow (i)$ follows easily by specialising $x=x_0$.
\end{proof}

\end{document}